\newcommand\Cref[1]{{Corollary~\ref{#1}}}
\newcommand{\etype}[1]{\renewcommand{\labelenumi}{(#1{enumi})}}
\def\eroman{\etype{\roman}}
\newtheorem{thm}{Theorem}[section] 
\newtheorem{cor}[thm]{Corollary}
\newtheorem{prop}[thm]{Proposition}
\newtheorem{rem}[thm]{Remark}
\def\ann{\operatorname {Ann}}
\def\Ann{\operatorname {Ann}}
\def\Nil{\operatorname {Nil}}
\def\({\left(}
\def\){\right)}
\def\End{\operatorname{End}}
\def\cent{\operatorname{Cent}}
\long\def\forget#1\forgotten{}
\newcommand\comp[3][\bullet]{{{#1}_{{\if1#2{}\else{#2}\fi}{\if#3K{}\else{(#3)}\fi}}}} 
\begin{document}

\title{Representability of PI-algebras}

\author{Be'eri Greenfeld}
\address{Department of Mathematics, Bar-Ilan University, Ramat-Gan
52900, Israel} %
\email{beeri.greenfeld@gmail.com}

\author{Louis Rowen}
\address{Department of Mathematics, Bar-Ilan University, Ramat-Gan
52900, Israel} %
\email{rowen@math.biu.ac.il}
 \thanks{The authors thank Lance Small for many helpful insights}. \thanks{This work was supported by
the U.S.-Israel Binational Science Foundation (grant no. 2010149)}.
\keywords{PI-algebra, Noetherian algebra, representable algebra, universal derivations} 
\subjclass[2010]  {Primary:  16R20, 16P20, 16P40   Secondary:
16P60,
 16S50}
\begin{abstract}
    This note concerns the still open question of representability of Noetherian PI-algebras.
    Extending a result of Rowen and Small (with an observation of Bergman)
    that every finitely generated module over a commutative Noetherian ring containing a field is representable,  we provide a representability machinery for a Noetherian PI-algebra $R$ containing a field, which includes the case that $R$ is finite (as a module) over a commutative
 subalgebra isomorphic to $R/N$.
    We  construct a family of non-representable PI-algebras demonstrating the sharpness of these results, as well as of some well known previous representability results.
\end{abstract}

\maketitle

\section{Introduction}

One of the major problems in the theory of algebras satisfying
polynomial identities (PI-algebras) is to determine whether a given
PI-algebra is representable, namely, whether it embeds into a matrix
ring over a field, or more generally a commutative ring. To
distinguish between the two cases, we call the latter case weakly
representable (after \cite{RowenSmall}).



We produce a representability machinery, allowing one to deduce
that, under certain conditions, a given Noetherian PI-algebra is
representable, including the case that $R$ is finite over a
commutative subalgebra isomorphic to $R/N$. The significance of this
result is in  that there is an affine PI-algebra satisfying ACC on
ideals satisfying this property, which is not weakly representable
(\S\ref{affine_example}).

In another direction, a ring $R$ with nilradical
$\Nil(R)\triangleleft R$ is \textit{semiprimary} if $R/\Nil(R)$ is
semisimple Artinian. Amitsur-Rowen-Small proved that a semiprimary
PI-algebra whose radical squared zero is weakly representable
(appears in \cite[Section~6]{RowenSmall}). We present a semiprimary
PI-algebra with radical cubed zero, which is non-weakly
representable (\S\ref{semip}).

Even the question of whether arbitrary Artinian PI-algebras are
representable still seems to be open \cite[Question 5 in page
388]{book}.

\section{Representability}

\subsection{A representability machinery}

We write $N$ for $\Nil(R)$, which is nilpotent when $R$ is left
Noetherian. We present this result in a fair amount of generality.

\begin{thm} \label{machinery2}
Let $R$ be a left Noetherian algebra over a field, which contains a
weakly Noetherian subalgebra $W\subseteq R$ such that $R/N$ is a
finitely generated left module over~ $\overline{W}$, the reduction
of $W$ modulo $N$, satisfying the condition:

  $W[c^{-1}]$ is finite over its center, for some
 $c$ of $C: = \cent (W)$ which is regular in $R$.

  Then $R$ is representable.
\end{thm}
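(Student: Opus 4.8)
The plan is to reduce to the situation where (a localization of) $R$ is finitely generated as a one-sided module over a commutative Noetherian subalgebra, and then invoke the cited result of Rowen--Small (with Bergman), which yields that $\End_{A}(M)$ is representable for any finitely generated module $M$ over a commutative Noetherian $k$-algebra $A$. The one genuine subtlety is that the natural ring $Z := \cent(W)$ is central in $W$ but need not be central in $R$; I plan to circumvent this by representing $R^{\op}$ via right multiplications, which are automatically $Z$-linear by associativity. First I would invert $c$. Since $R$ is a Noetherian PI-algebra it has a classical ring of quotients, so the regular element $c$ is Ore and $R$ embeds into $S := R[c^{-1}]$; as representability is inherited by subalgebras, it suffices to prove $S$ representable. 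Localization being exact and flat, $\Nil(S) = N[c^{-1}]$ is again nilpotent, $S/\Nil(S) \isom (R/N)[c^{-1}]$ is a finitely generated left module over $\overline{W}[c^{-1}] = \overline{W[c^{-1}]}$, and by hypothesis $W[c^{-1}]$ is finite over its center $Z' := Z(W[c^{-1}])$, which is a commutative Noetherian $k$-algebra (Noetherian by Eakin--Nagata together with the weak Noetherianity of $W$).

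Next I would show that ${}_{Z'}S$, that is $S$ regarded as a left $Z'$-module, is finitely generated. Because $W[c^{-1}]$ is finite over the \emph{central} subring $Z'$, its reduction $\overline{W[c^{-1}]}$ is a finitely generated left $\overline{Z'}$-module, and hence so is $S/\Nil(S)$. Writing $\Nil(S)^{m} = 0$, each layer $\Nil(S)^{i}/\Nil(S)^{i+1}$ is a finitely generated left $S$-module (as $S$ is left Noetherian), hence a finitely generated left $S/\Nil(S)$-module, hence a finitely generated left $Z'$-module. Running up the finite filtration $S \supseteq \Nil(S) \supseteq \cdots \supseteq \Nil(S)^{m} = 0$ and using that extensions of finitely generated modules over the Noetherian ring $Z'$ are again finitely generated, I conclude that ${}_{Z'}S$ is finitely generated.

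Now for the representation itself. Right multiplication $\rho(r)\colon x \mapsto xr$ is a left $Z'$-module endomorphism of ${}_{Z'}S$ purely by associativity; this is exactly where passing to the opposite algebra rescues the non-centrality of $Z'$ in $S$, since we never require $Z'$ to commute with elements of $S$. The assignment $r \mapsto \rho(r)$ is an injective algebra homomorphism $S^{\op} \hookrightarrow \End_{Z'}({}_{Z'}S)$, injectivity being clear from $\rho(r)(1) = r$. Applying the cited module-representability result to the finitely generated module ${}_{Z'}S$ over the commutative Noetherian $k$-algebra $Z'$, the ring $\End_{Z'}({}_{Z'}S)$ is representable, hence so is its subalgebra $S^{\op}$. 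Since $M_{n}(F)^{\op} \isom M_{n}(F)$, representability is preserved under passage to the opposite algebra, so $S$ is representable, and therefore so is its subalgebra $R$.

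I expect the main obstacle to be the interaction between the nilpotent ideal $N$ and finite generation: representability is \emph{not} inherited by extensions in general (indeed the paper constructs counterexamples in \S\ref{semip} and \S\ref{affine_example}), so the crucial point is that the single commutative Noetherian ring $Z'$ must simultaneously control all the layers $\Nil(S)^{i}/\Nil(S)^{i+1}$ through the left-Noetherian finiteness, so that the whole of $S$ --- not merely its associated graded --- is realized as a finitely generated $Z'$-module. The remaining technical points, namely justifying the embedding $R \hookrightarrow R[c^{-1}]$ (the Ore condition for the regular element $c$) and the Noetherianity of $Z'$, I regard as standard inputs from the localization theory of Noetherian PI-algebras.
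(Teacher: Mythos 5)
Your proof is correct and follows essentially the same route as the paper's: both reduce to showing that (a localization of) $R$ is a finitely generated module over a commutative Noetherian subring (via a filtration-by-powers-of-$N$ argument using left Noetherianity, plus Eakin--Formanek/Nagata), then apply the cited Rowen--Small endomorphism-ring result to the right-regular representation of the opposite algebra and transpose back. The only cosmetic difference is that you invert $c$ before running the finiteness induction, whereas the paper first proves $R$ is module-finite over $W$ and localizes afterward.
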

\begin{proof} 
Let $d$ be the nilpotency index of $N$. Consider $R$ as a (left)
$W$-module via the natural action of~$W$, being a subalgebra of $R$.

Since $R/N$ is finite over $\overline{W}$, we can write:
$$R/N=\overline{W}\overline{r}_1+\cdots\overline{W}\overline{r}_q$$
Pick arbitrary lifts $r_1,\dots,r_q\in R$ such that
$\overline{r}_i=R_i+N$. Therefore:
$$R=Wr_1+\cdots+Wr_q+N.$$
We now prove by induction on $e$ that $N^{d-e}$ is a finitely
generated $W$-module. First take $e=1$. Since $R$ is left
Noetherian, we can write $N^{d-1}$ as a finitely generated
$R$-module:
$$N^{d-1}=\sum_{i=1}^{k} Ru_i =\sum_{i=1}^{k}
(Wr_1+\cdots+Wr_q+N)u_i=\sum_{i=1}^{k} (Wr_1+\cdots+Wr_q)u_i
$$
where the third equality follows since $Nu_1,\dots,Nu_k\subseteq
N^d=0$.

Now assume that the claim was proved for $e'<e$. Again, since $R$ is
left Noetherian we can write $N^{d-e}$ as a finitely generated
$R$-module:
$$N^{d-e}=\sum_{i=1}^{l} Rv_i =\sum_{i=1}^{l} (Wr_1+\cdots+Wr_q+N)v_i=\sum_{i=1}^{l} (Wr_1+\cdots+Wr_q)v_i+N^{d-e+1}$$
But by the induction hypothesis, $N^{d-e+1}$ is a finitely generated
$W$-module, so $N^{d-e}$ is finitely generated as well. In
particular, we get that $N$ itself is a finitely generated
$W$-module, and therefore so is $R$.

 We can localize $R[c^{-1}]$ to be a finite
module over $W[c^{-1}]$, and thus over   $C[c^{-1}]$, which is
Noetherian by the Eakin-Formanek theorem \cite[Theorem~5.1.12]{Row}
since $W[c^{-1}]$ is weakly Noetherian. But then
$\End_{C[c^{-1}]}R[c^{-1}]$ is representable by
\cite[Corollary~3.4]{RowenSmall}. Now $R^{op}$ acts by right
multiplication on itself, and this action is $C$-equivariant.
Therefore $R^{op}$ is representable; considering the transpose of
this representation, we obtain that $R$ is representable as well.
\end{proof}

Note that the condition satisfied in either of the following
situations:

\begin{enumerate}\eroman
\item $W$ already is finite over its center, in particular if $W$ is commutative;
\item $W$ is a semiprime PI-algebra and any   regular central element
$c$  of $W$ is regular in $R$, since there is such $c$ as seen via
\cite[Theorem~1.8.48]{Row}. Note that if $R$ is uniform as a
$W,C$-bimodule then
 by Fitting's  lemma, $\Ann _R (c^k)=0$ for some $k$
(since $Rc^k \ne 0$), so $\Ann _R (c)=0$.
\end{enumerate}

\begin{cor}  Suppose $R$ is    a left Noetherian algebra over a field, and
 $R/N$ is finite over a commutative polynomial ring $F[x].$ Then $R$ is representable.\label{linear_growth}
\end{cor}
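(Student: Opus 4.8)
The plan is to derive this as a direct specialization of \Tref{machinery2}, so the whole task reduces to manufacturing a subalgebra $W\subseteq R$ that meets the hypotheses of that theorem and whose reduction modulo $N$ is the given $F[x]$. By hypothesis there is an embedding $F[x]\hookrightarrow R/N$ making $R/N$ a finitely generated $F[x]$-module; write $\bar w\in R/N$ for the image of $x$. I would first lift this generator: choose any preimage $w\in R$ under the projection $R\to R/N$, and set $W:=F[w]$, the unital $F$-subalgebra generated by the single element $w$.

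Next I would check that $W$ satisfies the structural requirements of \Tref{machinery2}. Since $W$ is generated by one element it is commutative, and being a homomorphic image of the polynomial ring $F[x]$ it is Noetherian by the Hilbert basis theorem, hence weakly Noetherian. Its reduction modulo $N$ is $\overline{W}=F[\bar w]=F[x]$, so by hypothesis $R/N$ is a finitely generated left $\overline{W}$-module, exactly as required.

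Finally I would verify the localization condition. Because $W$ is commutative it coincides with its center $C=\cent(W)$, so it is trivially finite over its center; this is precisely situation (i) of the Note preceding the corollary. Taking $c=1\in C$, which is regular in $R$, yields $W[c^{-1}]=W$ finite over its center, so all hypotheses of \Tref{machinery2} hold and we conclude that $R$ is representable.

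I do not expect a serious obstacle, as the statement is essentially the commutative instance of the machinery. The only point requiring minor care is that the chosen lift $w$ need not generate a genuine polynomial ring inside $R$, since relations may appear; but this is harmless, as all that is used is that $W=F[w]$ is commutative and Noetherian with $\overline{W}=F[x]$, which holds regardless of such relations.
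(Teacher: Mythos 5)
Your proof is correct and is essentially the paper's own argument: the paper likewise lifts $x$ to an element of $R$, sets $W$ to be the commutative (hence weakly Noetherian, with $\overline{W}\cong F[x]$) subalgebra it generates, and applies \Tref{machinery2} via case (i) of the preceding Note. Your explicit choice of $c=1$ and your remark that the lift need not generate a genuine polynomial ring are, if anything, slightly more careful than the paper's one-line proof.
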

\begin{proof} Lift $x$ to an element $c$ of $R$. Then $W := F[c]$  is
a commutative algebra  satisfying the hypothesis of the theorem,
since $\bar W \approx F[x]$ is Noetherian.
\end{proof}

\begin{cor} \label{linear_growth2}
Let $R$ be a left Noetherian algebra with nilpotent radical $N$,
such that $R/N$ is affine of linear growth. Then $R$ is
representable.
\end{cor}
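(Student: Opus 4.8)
The plan is to reduce to \Cref{linear_growth} by producing a central polynomial subalgebra $F[x]\subseteq R/N$ over which $R/N$ is module-finite. Write $\bar R := R/N$. Since $R$ is left Noetherian and $N=\Nil(R)$, the quotient $\bar R$ is a semiprime affine PI-algebra, and ``affine of linear growth'' means $\GKdim\bar R\le 1$. The degenerate case $\GKdim\bar R=0$ is immediate: then $\bar R$ is finite-dimensional over $F$, and since each layer $N^i/N^{i+1}$ is a finitely generated $\bar R$-module (as $R$ is Noetherian) it too is finite-dimensional, so $R$ is finite-dimensional and embeds into a matrix algebra over $F$ via its left regular representation. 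Thus I may assume $\GKdim\bar R=1$.

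The crux is the structure theory of affine algebras of Gelfand--Kirillov dimension one. By the Small--Warfield theorem, a prime affine algebra of $\GKdim$ one over a field is a finite module over its center, the latter being an affine commutative domain of $\GKdim$ one. Applying this to the (finitely many) prime quotients $\bar R/P_i$, where $P_1,\dots,P_m$ are the minimal primes of the semiprime Noetherian ring $\bar R$, one obtains that each $\bar R/P_i$ is finite over a central affine subalgebra of $\GKdim$ one. I would then assemble these into a single commutative affine subalgebra $A$ of $\GKdim$ one over which $\bar R$ is finite, using the embedding $\bar R\hookrightarrow\prod_i\bar R/P_i$ together with the fact that a submodule of a finitely generated module over a commutative Noetherian ring is again finitely generated.

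Once $\bar R$ is module-finite over a commutative affine subalgebra $A$, Noether normalization provides a polynomial subring $F[x]\subseteq A$ over which $A$ is finite; hence $\bar R=R/N$ is finite over $F[x]$, and \Cref{linear_growth} yields representability of $R$.

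The main obstacle is the middle step: promoting the per-component structure theorem to a single commutative subalgebra of $\bar R$ itself, and ensuring the resulting polynomial generator is central in $\bar R$ so that the hypotheses of \Tref{machinery2}, via \Cref{linear_growth}, genuinely apply. This is where the semiprimeness of $\bar R$ and the compatibility of the central subalgebras $\cent(\bar R/P_i)$ with the embedding $\bar R\hookrightarrow\prod_i\bar R/P_i$ must be exploited with care; the prime case itself is handled cleanly by Small--Warfield.
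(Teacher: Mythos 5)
Your overall route is the paper's: reduce to \Cref{linear_growth} by exhibiting a commutative polynomial subring of $R/N$ over which $R/N$ is module-finite. The paper does this in one line by quoting \cite{SSW}, where the relevant theorem is stated directly for \emph{semiprime} affine algebras of linear growth: such an algebra is a finitely generated module over a central polynomial ring in one variable $F[t]\subseteq R/N$. You instead try to rebuild the semiprime case from the prime case (Small--Warfield), and that is exactly where your argument has a genuine hole --- the step you yourself flag as ``the main obstacle'' is not merely delicate, it is missing.

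Concretely: applying the prime case to each $\bar R/P_i$ and taking products yields a commutative affine subalgebra $A$ of $\prod_i\bar R/P_i$ over which the \emph{product} is finite, and your ``submodule of a finitely generated module over a commutative Noetherian ring'' observation then shows only that $\bar R$ is a finite module over a ring that need not be contained in $\bar R$ at all. But \Cref{linear_growth} requires $F[x]\subseteq R/N$ as a subalgebra, precisely so that $x$ can be lifted to an element $c\in R$ and $W=F[c]$ fed into \Tref{machinery2}; finiteness as a module over an external commutative ring cannot be plugged in. To close the gap you need an additional input forcing enough of $A$ into $\bar R$ itself --- e.g.\ the theorem that the center of a semiprime PI-ring meets every nonzero ideal nontrivially, so that $Z(\bar R)$ contains a suitable normalization parameter --- or you simply quote the semiprime form of the Small--Stafford--Warfield theorem, as the paper does. (Two minor remarks: your $\GKdim=0$ reduction is fine, and your concern about centrality of the generator in $\bar R$ is not actually needed for \Cref{linear_growth}, which only asks that $F[x]$ be commutative.)
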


\begin{proof}
Since $R/N$ is affine of linear growth, by \cite{SSW} it is a
finitely generated module over a central polynomial ring in one
variable $F[t]\subseteq R/N$.
\end{proof}

\section{Examples}

\subsection{A non-representable affine PI-algebra with ACC on ideals} \label{affine_example}

We give an example of a non-weakly representable affine PI-algebra
satisfying ACC on ideals. Moreover, the quotient of our example by
its nilpotent radical is a polynomial ring in one variable, thus
emphasizing the sharpness of Corollary \ref{linear_growth}.
(Compare with \cite{Markov}.) Let $A$ be an $F$-algebra and $M$ an
$A$-bimodule. Given an $F$-linear map $B:M\otimes_A M\rightarrow F$,
we can define an $F$-algebra:

\begin{equation*}
R = \left(\begin{matrix}
        F & M & F\\
        0 & A & M \\
        0 & 0 & F
\end{matrix}\right)
\end{equation*}
whose multiplication is given by:
\begin{equation*}
\left(\begin{matrix}
        \alpha_1 & v & \lambda \\
        0 & f & w \\
        0 & 0 & \alpha_2
\end{matrix}\right)
\left(\begin{matrix}
        \alpha_1' & v' & \lambda' \\
        0 & f' & w' \\
        0 & 0 & \alpha_2'
\end{matrix}\right) =
\left(\begin{matrix}
        \alpha_1\alpha_1' & \alpha_1v'+vf' & \alpha_1\lambda'+\alpha_2'\lambda+B(v,w') \\
        0 & ff' & fw'+\alpha_2'w \\
        0 & 0 & \alpha_2\alpha_2'
\end{matrix}\right)
\end{equation*}

Since $B$ is $A$-equivariant, namely $B$ is defined over $M\otimes_A
M$, this multiplication law endows $R$ with a well-defined
 $F$-algebra structure. To check associativity, note that the only
 difficulty would be checking the products involving the  $1,2$ and $2,3$
 positions, but
 $$(\alpha_1 e_{1,1}\cdot v' e_{1,2})\cdot w''e_{2,3} = B(\alpha_1 v', w'') =
 \alpha_1 B(v', w'') = (\alpha_1 e_{1,1}\cdot (v' e_{1,2}\cdot w''e_{2,3}).$$

We now specify $A$ and $M$. Let $A=F[t]$ and let
$M=Fu_1+Fu_2+\cdots$ be a countable dimensional $F$-vector space. We
consider $M$ as an $F[t]$-bimodule through:
$$tu_i=ut_i=u_{i+1}$$

We now specify $B$, writing it as a bilinear form $B$. Set:
$$B(u_i,u_j)=\begin{array}{cc}
  \Bigg\{ &
    \begin{array}{cc}
      1, & \text{if}\ \ \ \exists t\geq 1:\ i+j=2^t \\
      0, & \text{otherwise}
    \end{array}
\end{array}$$
It is easy to verify that:
$$B(u_it^j,u_k)=B(u_{i+j},u_k)=B(u_i,u_{j+k})=B(u_i,t^ju_k)$$
So $B$ is a well-defined $F$-linear map defined over $M\otimes_F[t]
M$ and thus $R$ is a well defined $F$-algebra:
\begin{equation*}
R = \left(\begin{matrix}
        F & M & F\\
        0 & F[t] & M \\
        0 & 0 & F
\end{matrix}\right)
\end{equation*}

\begin{prop}
The algebra $R$ is an affine PI-algebra whose radical cubed zero and
$R/N\cong F\times F[t]\times F$.
\end{prop}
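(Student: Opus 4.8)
The plan is to verify the four assertions—the identification $R/N\cong F\times F[t]\times F$, nilpotency with $N^3=0$, affineness, and the PI property—separately, working throughout with $R$ as $3\times 3$ upper triangular matrices under the twisted multiplication displayed above (whose associativity is already granted). First I would observe that the set $N$ of matrices with zero diagonal is a two-sided ideal: in the product formula the three diagonal entries are $\alpha_1\alpha_1'$, $ff'$, $\alpha_2\alpha_2'$, each of which vanishes as soon as one of the two factors has zero diagonal. The map sending a matrix to its diagonal triple $(\alpha_1,f,\alpha_2)$ is then an $F$-algebra homomorphism onto $F\times F[t]\times F$ with kernel exactly $N$, giving $R/N\cong F\times F[t]\times F$. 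Since this quotient is reduced, once $N$ is shown nilpotent it follows automatically that $N=\Nil(R)$, as the notation demands.

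For the radical I would compute $N^2$ and $N^3$ directly. Multiplying two elements of $N$ kills every entry except the $(1,3)$ slot, where only the term $B(v,w')$ survives (the contributions $\alpha_1\lambda'$ and $\alpha_2'\lambda$ drop out because both $\alpha_1$ and $\alpha_2'$ are zero on $N$); hence $N^2\subseteq F e_{13}$. Multiplying $Fe_{13}$ by any element of $N$ on either side then forces the surviving $(1,3)$ contributions to vanish, since the $N$-factor has $\alpha_2'=0$ together with a zero entry in the slot feeding into $B$. Thus $N\cdot Fe_{13}=Fe_{13}\cdot N=0$, and therefore $N^3=0$.

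Affineness I would establish by exhibiting an explicit finite generating set, namely the diagonal idempotents $e_{11},e_{22},e_{33}$, the element $t e_{22}$, and the two elements $u_1 e_{12}$ and $u_1 e_{23}$. Right-multiplying $u_1 e_{12}$ by powers of $t e_{22}$ produces every $u_i e_{12}$, and symmetrically left-multiplying $u_1 e_{23}$ recovers every $u_i e_{23}$; moreover $u_1 e_{12}\cdot u_1 e_{23}=B(u_1,u_1)e_{13}=e_{13}$, using $B(u_1,u_1)=1$ since $1+1=2^1$. These products recover all of $M e_{12}$, $M e_{23}$, $F e_{13}$ together with the diagonal, so the listed finitely many elements generate $R$.

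The PI property is the one point requiring an idea rather than a computation, and I expect it to be the main (though mild) obstacle. I would equip $R$ with the $\N$-grading $R=R_0\oplus R_1\oplus R_2$ by the difference of column and row indices, so that $R_0=F e_{11}\oplus F[t]e_{22}\oplus F e_{33}$ is the commutative diagonal, $R_1=M e_{12}\oplus M e_{23}$, and $R_2=F e_{13}$; the $B$-term respects this grading since it arises from $R_1 R_1\subseteq R_2$. Because $R_0$ is commutative, every commutator $[x,y]$ has vanishing degree-$0$ component and hence lies in $R_1\oplus R_2$, while any product of three elements of $R_1\oplus R_2$ lands in total degree $\geq 3$, which is zero. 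Consequently $[x_1,y_1][x_2,y_2][x_3,y_3]$ vanishes identically on $R$, so $R$ satisfies this degree-$6$ identity and is PI, completing all four claims.
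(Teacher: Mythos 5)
Your proposal is correct and follows essentially the same route as the paper's proof: identify $N$ with the strictly upper triangular part, check $N^3=0$ and $R/N\cong F\times F[t]\times F$, exhibit the finite generating set, and deduce the identity $[X_1,X_2][X_3,X_4][X_5,X_6]=0$ from the commutativity of $R/N$ together with $N^3=0$ (your grading argument is just this observation in disguise). The only difference is cosmetic: you note that $E_{1,3}=u_1e_{12}\cdot u_1e_{23}$ is a redundant generator, and you supply the verifications that the paper leaves implicit.
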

\begin{proof}
The algebra $R$ is generated over $F$ by the following elements:
$$
E_{1,1}=\left(\begin{matrix}
        1 & 0 & 0\\
        0 & 0 & 0 \\
        0 & 0 & 0
\end{matrix}\right),\ E_{1,3}=\left(\begin{matrix}
        0 & 0 & 1\\
        0 & 0 & 0 \\
        0 & 0 & 0
\end{matrix}\right),\
E_{2,2}=\left(\begin{matrix}
        0 & 0 & 0\\
        0 & 1 & 0 \\
        0 & 0 & 0
\end{matrix}\right),\
E_{3,3}=\left(\begin{matrix}
        0 & 0 & 0\\
        0 & 0 & 0 \\
        0 & 0 & 1
\end{matrix}\right)
$$
$$
\left(\begin{matrix}
        0 & 0 & 0\\
        0 & t & 0 \\
        0 & 0 & 0
\end{matrix}\right),\ \ \
\left(\begin{matrix}
        0 & u_1 & 0\\
        0 & 0 & 0 \\
        0 & 0 & 0
\end{matrix}\right),\ \ \
\left(\begin{matrix}
        0 & 0 & 0\\
        0 & 0 & u_1 \\
        0 & 0 & 0
\end{matrix}\right)
$$
Observe that the nilpotent radical of $R$ is:
\begin{equation*}
N=\left(\begin{matrix}
        0 & M & F\\
        0 & 0 & M \\
        0 & 0 & 0
\end{matrix}\right)
\end{equation*}
So $N^3=0$ and $R/N\cong F\times F[t]\times F$. Moreover, $R$
satisfies the polynomial identity:
$$[X_1,X_2][X_3,X_4][X_5,X_6]=0,$$
as claimed.
\end{proof}

\begin{prop}
The algebra $R$ does not satisfy ACC on (left) annihilators.
\end{prop}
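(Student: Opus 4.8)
The plan is to exhibit an infinite strictly descending chain of right annihilators. Since for any ring the lattices of left and right annihilators are anti-isomorphic via $L\mapsto \rho(L)$ and $K\mapsto \ell(K)$ (where $\ell(\,\cdot\,)$ and $\rho(\,\cdot\,)$ denote the left and right annihilators), this is equivalent to the failure of ACC on left annihilators; explicitly, the left annihilators $\ell(\rho(X_n))$ will then form a strictly ascending chain, using the standard identity $\rho\ell\rho=\rho$ to see they are distinct.

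First I would compute the right annihilator of a single element in the $(1,2)$-position. Writing $y$ for a general element with $(2,2)$-entry $f'$ and $(2,3)$-entry $w'$, the given product law gives $(u_j e_{1,2})\,y = (u_j f')\,e_{1,2} + B(u_j,w')\,e_{1,3}$, all other entries of the product being zero. Since $u_j f'=\sum_i c_i u_{j+i}$ vanishes only when $f'=0$, we obtain $\rho(u_j e_{1,2})=\{y : f'=0,\ B(u_j,w')=0\}$, with all remaining entries of $y$ unconstrained. Taking $X_n=\{u_1 e_{1,2},\dots,u_n e_{1,2}\}$, the right annihilator $\rho(X_n)$ consists of those $y$ with $f'=0$ and $w'\in\bigcap_{j=1}^n\ker\phi_j$, where $\phi_j:=B(u_j,-)\colon M\to F$. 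Hence the chain $\rho(X_1)\supseteq\rho(X_2)\supseteq\cdots$ is strictly descending precisely when the functionals $\phi_1,\phi_2,\dots$ are $F$-linearly independent.

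The main obstacle, and the only point where the arithmetic of $B$ is used, is proving this linear independence. Writing $b_m=1$ when $m$ is a power of $2$ and $0$ otherwise, we have $\phi_j(u_k)=b_{j+k}$, so independence of the $\phi_j$ amounts to independence of the rows of the Hankel array $[b_{j+k}]$. Suppose $\sum_{j=1}^N c_j\phi_j=0$; evaluating on $u_k$ yields $\sum_{j=1}^N c_j b_{j+k}=0$ for every $k\ge 1$. I would exploit that consecutive powers of $2$ eventually lie more than $N$ apart: fixing $j_0\le N$ and choosing $t$ large with $k=2^t-j_0$, the only $j\in\{1,\dots,N\}$ with $j+k$ a power of $2$ is $j=j_0$, because $2^t+(j-j_0)$ differs from $2^t$ by less than $N$ and so, for $t$ large, is strictly between $2^{t-1}$ and $2^{t+1}$ and distinct from $2^t$, hence not a power of $2$. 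The relation collapses to $c_{j_0}=0$, and letting $j_0$ range over $\{1,\dots,N\}$ forces $c=0$.

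Finally I would assemble the pieces. Linear independence gives $\phi_{n+1}\notin\operatorname{span}(\phi_1,\dots,\phi_n)$, so $\phi_{n+1}$ does not vanish on $\bigcap_{j\le n}\ker\phi_j$; thus $\bigcap_{j\le n}\ker\phi_j\supsetneq\bigcap_{j\le n+1}\ker\phi_j$ and therefore $\rho(X_n)\supsetneq\rho(X_{n+1})$ for every $n$. This infinite strictly descending chain of right annihilators corresponds, under the anti-isomorphism above, to an infinite strictly ascending chain of left annihilators, so $R$ does not satisfy ACC on left annihilators.
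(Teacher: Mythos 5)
Your argument is correct, but it takes a genuinely different route from the paper's. The paper works directly with \emph{left} annihilators: it takes the infinite, decreasing sets $S_n=\{u_{2^m}e_{2,3}: m\ge n\}$ in the $(2,3)$-position and shows $\ann(S_1)\subset\ann(S_2)\subset\cdots$ is strictly increasing by exhibiting the single witness $u_{2^n}e_{1,2}\in\ann(S_{n+1})\setminus\ann(S_n)$, which needs only the one arithmetic fact that $2^n+2^m$ is a power of $2$ exactly when $m=n$ (for $m\ge n$). You instead build a strictly descending chain of \emph{right} annihilators of the finite sets $X_n=\{u_1e_{1,2},\dots,u_ne_{1,2}\}$ and transfer it across the standard anti-isomorphism $K\mapsto\ell(K)$, using $\rho\ell\rho=\rho$ for strictness; the arithmetic input is repackaged as the linear independence of the functionals $\phi_j=B(u_j,-)$, proved via the growing gaps between consecutive powers of $2$. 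Your computation of $\rho(u_je_{1,2})$ from the multiplication law is right, as is the reduction of strict descent to $\phi_{n+1}\notin\operatorname{span}(\phi_1,\dots,\phi_n)$. The paper's proof is shorter and self-contained; yours costs the duality lemma and the independence argument but yields slightly more information (independence of all the $\phi_j$, and a chain indexed by finite sets rather than infinite ones), and it would adapt to any bilinear form whose associated functionals are independent, not just this particular $B$.
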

\begin{proof}
For a vector $w\in M$ denote: $w^{\perp}=\{v\in M|\ B(v,w)=0\}$.
Notice that:
\begin{equation*}
\ann \left(\begin{matrix}
        0 & 0 & 0 \\
        0 & 0 & u_i \\
        0 & 0 & 0
\end{matrix}\right) = \left(\begin{matrix}
        F & u_i^{\perp} & F \\
        0 & 0 & M \\
        0 & 0 & F
\end{matrix}\right)
\end{equation*}
Consider the sets:
\begin{equation*}
S_n = \Bigg\{\left(\begin{matrix}
        0 & 0 & 0 \\
        0 & 0 & u_{2^n}\\
        0 & 0 & 0
\end{matrix}\right), \left(\begin{matrix}
        0 & 0 & 0 \\
        0 & 0 & u_{2^{n+1}} \\
        0 & 0 & 0
\end{matrix}\right),\dots\Bigg\}
\end{equation*}
It is easy to see that: $$\left(\begin{matrix}
        0 & u_{2^n} & 0 \\
        0 & 0 & 0 \\
        0 & 0 & 0
\end{matrix}\right)\in \ann(S_{n+1})\setminus \ann(S_n)$$
since $2^n+2^m$ is not a power of $2$ for $m>n$. Therefore we have
an infinite strictly increasing ascending chain of annihilators:
$$\ann(S_1)\subset \ann(S_2)\subset \cdots$$
So $R$ does not satisfy ACC on left annihilators.
\end{proof}
We now consider two-sided ideals of $R$.

\begin{prop}
The algebra $R$ satisfies ACC on two sided ideals.
\end{prop}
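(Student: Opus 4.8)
The plan is to exploit the Peirce decomposition afforded by the orthogonal idempotents $e_1=E_{1,1}$, $e_2=E_{2,2}$, $e_3=E_{3,3}$, which satisfy $e_1+e_2+e_3=1$. Writing $R=\bigoplus_{i\le j}e_iRe_j$, the nonzero blocks are $e_1Re_1=F$, $e_1Re_2=M$, $e_1Re_3=F$, $e_2Re_2=F[t]$, $e_2Re_3=M$ and $e_3Re_3=F$. For any two-sided ideal $I$ one has $I=\bigoplus_{i\le j}e_iIe_j$ (write $x=(\sum_i e_i)x(\sum_j e_j)$, noting the summands sit in distinct matrix positions), each $e_iIe_j$ being an $F$-subspace of the corresponding block; and an inclusion $I\subseteq I'$ holds if and only if $e_iIe_j\subseteq e_iI'e_j$ in every block. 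Thus it suffices to prove that each block, with its natural module structure, satisfies ACC: then in an ascending chain each of the six blocks stabilizes, and taking the maximum of the finitely many stabilization indices the chain itself stabilizes.

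Next I would pin down the module structure of each block. The three blocks landing in $F$, namely $e_1Ie_1$, $e_1Ie_3$, $e_3Ie_3$, are $F$-subspaces of $F$, hence $0$ or $F$, and trivially satisfy ACC. The block $e_2Ie_2$ is an ideal of $F[t]=e_2Re_2$, which is Noetherian, so these satisfy ACC. For the two blocks inside $M$: since $I$ is a right ideal, $(e_1Ie_2)(e_2Re_2)\subseteq e_1Ie_2$, so $e_1Ie_2$ is a right $F[t]$-submodule of $M$; since $I$ is a left ideal, $(e_2Re_2)(e_2Ie_3)\subseteq e_2Ie_3$, so $e_2Ie_3$ is a left $F[t]$-submodule of $M$. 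One checks directly from the multiplication table that $(ue_{1,2})(fe_{2,2})=(uf)e_{1,2}$ and $(fe_{2,2})(ue_{2,3})=(fu)e_{2,3}$, so the twisting form $B$ makes no contribution to these two blocks.

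The crux is the observation that $M$ is Noetherian as an $F[t]$-module. Indeed $t^{k}u_1=u_{k+1}$, so $M=F[t]u_1$ is cyclic, and as the $u_{k+1}=t^ku_1$ are $F$-linearly independent, $M\cong F[t]$ as a left (equivalently right, the bimodule being symmetric) $F[t]$-module. Since $F[t]$ is Noetherian, $M$ is a Noetherian $F[t]$-module and its submodules satisfy ACC; in particular $e_1Ie_2$ and $e_2Ie_3$ do. Combining the three cases, every Peirce block satisfies ACC, and by the reduction of the first paragraph $R$ satisfies ACC on two-sided ideals.

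The main obstacle is conceptual rather than computational: one must verify that passing to Peirce blocks genuinely reduces the ideal-theoretic question to a module-theoretic one, i.e. that a two-sided ideal is the internal direct sum of its blocks and that the off-diagonal blocks $e_1Ie_2$, $e_2Ie_3$ are honest $F[t]$-submodules despite the twisted product. The twist enters only the one-dimensional $(1,3)$-block, through the constraint $B(e_1Ie_2,M)\subseteq e_1Ie_3$ and its mirror $B(M,e_2Ie_3)\subseteq e_1Ie_3$; such a constraint can only shrink an ideal and never manufactures an infinite ascending chain. This is precisely why ACC persists for two-sided ideals even though it fails for left annihilators, where the $B$-cut-out subspaces of $M$ need not be $F[t]$-submodules.
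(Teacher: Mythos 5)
Your proof is correct and follows essentially the same route as the paper's: both decompose a two-sided ideal into its six matrix-position (Peirce) components --- the paper does this by explicitly multiplying a general element of the ideal by $E_{i,i}$ and $E_{2,2}-1$ --- and then observe that each component ranges over a poset satisfying ACC. The only (immaterial) difference is the final step, where you invoke Noetherianity of $M\cong F[t]$ as a cyclic one-sided $F[t]$-module, while the paper argues that every nonzero ideal of $F[t]$ and every nonzero sub-bimodule of $M$ has finite codimension.
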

\begin{proof}
Let $I\triangleleft R$, fix some $r\in I$, and write:
$$r = \left(\begin{matrix}
        \alpha_1 & v & \lambda \\
        0 & f(t) & w\\
        0 & 0 & \alpha_2
\end{matrix}\right)$$

Then:
$$E_{1,1}rE_{1,1}=\left(\begin{matrix}
        \alpha & 0 & 0 \\
        0 & 0 & 0\\
        0 & 0 & 0
\end{matrix}\right),\ \ \ E_{2,2}rE_{2,2}=\left(\begin{matrix}
        0 & 0 & 0 \\
        0 & f(t) & 0\\
        0 & 0 & 0
\end{matrix}\right)$$ $$E_{3,3}rE_{3,3}=\left(\begin{matrix}
        0 & 0 & 0 \\
        0 & 0 & 0\\
        0 & 0 & \beta
\end{matrix}\right),\ \ \ E_{1,1}rE_{3,3}=\left(\begin{matrix}
        0 & 0 & \lambda \\
        0 & 0 & 0\\
        0 & 0 & 0
\end{matrix}\right)$$
$$\left(E_{2,2}-1\right)rE_{2,2}=\left(\begin{matrix}
        0 & v & 0 \\
        0 & 0 & 0\\
        0 & 0 & 0
\end{matrix}\right),\ \ \ E_{2,2}r\left(E_{2,2}-1\right)=\left(\begin{matrix}
        0 & 0 & 0 \\
        0 & 0 & w\\
        0 & 0 & 0
\end{matrix}\right)$$
Moreover:
$$
\left(\begin{matrix}
        0 & v & 0 \\
        0 & 0 & 0\\
        0 & 0 & 0
\end{matrix}\right)
\left(\begin{matrix}
        0 & 0 & 0 \\
        0 & p(t) & 0\\
        0 & 0 & 0
\end{matrix}\right)=\left(\begin{matrix}
        0 & vp(t) & 0 \\
        0 & 0 & 0\\
        0 & 0 & 0
\end{matrix}\right),\ \ \
\left(\begin{matrix}
        0 & 0 & 0 \\
        0 & p(t) & 0\\
        0 & 0 & 0
\end{matrix}\right)
\left(\begin{matrix}
        0 & 0 & 0 \\
        0 & 0 & w\\
        0 & 0 & 0
\end{matrix}\right)=\left(\begin{matrix}
        0 & 0 & 0 \\
        0 & 0 & p(t)w\\
        0 & 0 & 0
\end{matrix}\right)
$$
Then:
$$r\cdot \left(\begin{matrix}
        0 & 0 & 0 \\
        0 & t^i & 0\\
        0 & 0 & 0
\end{matrix}\right)=\left(\begin{matrix}
        0 & vt^i & 0 \\
        0 & f(t)t^i & 0\\
        0 & 0 & 0
\end{matrix}\right)$$
And:
$$\left(\begin{matrix}
        0 & 0 & 0 \\
        0 & p(t) & 0\\
        0 & 0 & 0
\end{matrix}\right)\cdot r\cdot \left(\begin{matrix}
        0 & 0 & 0 \\
        0 & 1 & 0\\
        0 & 0 & 0
\end{matrix}\right)=\left(\begin{matrix}
        0 & 0 & 0 \\
        0 & p(t) & 0\\
        0 & 0 & 0
\end{matrix}\right)\cdot \left(\begin{matrix}
        0 & vq(t) & 0 \\
        0 & f(t)q(t) & 0\\
        0 & 0 & 0
\end{matrix}\right)=
\left(\begin{matrix}
        0 & 0 & 0 \\
        0 & f(t)p(t) & 0\\
        0 & 0 & 0
\end{matrix}\right)$$
It follows that $I$ contains a subspace of the form:
\begin{equation*}
\left(\begin{matrix}
        E & V & L \\
        0 & J & W \\
        0 & 0 & K
\end{matrix}\right)
\end{equation*}
where $E,K,L\in \{0,F\}$, $J\triangleleft F[t]$ and $V,W$ are
$F[t]$-sub-bimodules of $M$. Moreover, $V,W$ are non-zero if and
only if $v,w$ are non-zero, respectively. Notice that any non-zero
ideal of $F[t]$ is finite codimensional, and any non-zero
sub-bimodule of $M$ is finite codimensional. Therefore we have that
$I$ is finite codimensional inside a subspace of the form:
\begin{equation*}
\left(\begin{matrix}
        E & V & L \\
        0 & J & W \\
        0 & 0 & K
\end{matrix}\right)
\end{equation*}
where $E,K,L\in \{0,F\}$, $J\in \{0,F[t]\}$ and $V,W\in \{0,M\}$. It
follows that any ascending chain of ideals of $R$ stabilizes.
\end{proof}

\begin{rem}
We can modify the algebra $R$ constructed above to be moreover
irreducible with $R/N\cong F[t]$, since an algebra with ACC on
ideals is a subdirect product of finitely many irreducible algebras,
and a subdirect product of irredcible (weakly) representable
algebras is again (weakly) representable.
\end{rem}

\subsection{A non weakly-representable semiprimary PI-algebra}\label{semip}

We now extend the construction from Subsection \ref{affine_example}.
We take $A=F(t)$ and $M=V$ a $1$-dimensional $F(t)$-vector space,
which we naturally identify with $F(t)$. Then $V\otimes_{F(t)}
V\cong V$ as an $F$-vector space by $v\otimes w\mapsto vw$. We fix
an $F$-linear basis for $F(t)$, say, $\mathfrak{B}$ containing
$1,t,t^2,\dots$ and define $\widetilde{B}:V\otimes_{F(t)}
V\rightarrow F$ on basis elements as follows:

$$\widetilde{B}(1,v)=\begin{array}{cc}
  \Bigg\{ &
    \begin{array}{cc}
      1, & \text{if}\ \ \ \exists k\geq 1:\ v=t^{2^k} \\
      0, & \text{otherwise}
    \end{array}
\end{array}$$

We can therefore form, in the same manner as of Subsection
\ref{affine_example}, the following $F$-algebra:

\begin{equation*}
S = \left(\begin{matrix}
        F & V & F\\
        0 & F(t) & V \\
        0 & 0 & F
\end{matrix}\right)
\end{equation*}

Notice that
%
the $F$-algebra
$R$ constructed in Subsection \ref{affine_example} embeds into $S$:

\begin{equation*}
R = \left(\begin{matrix}
        F & M & F\\
        0 & F[t] & M \\
        0 & 0 & F
\end{matrix}\right)\hookrightarrow
\left(\begin{matrix}
        F & V & F\\
        0 & F(t) & V \\
        0 & 0 & F
\end{matrix}\right) = S
\end{equation*}
Notice that, if $N\triangleleft S$ is the nilpotent radical of $S$,
then $S/N\cong F\times F(t)\times F$, so $S$ is a semiprimary
PI-algebra. Since $R$ is non-weakly representable, we get that so is
$S$.

\end{document}